\newtheorem{theorem}{Theorem}
\newtheorem{proposition}[theorem]{Proposition}
\newenvironment{proof}[1][Proof]{\noindent{\textbf {#1}  }}  {\hfill$\Box$}
\begin{document}

\title{Some extremal problems for hereditary properties of graphs }
\author{Vladimir Nikiforov\thanks{Department of Mathematical Sciences, University of
Memphis, Memphis TN 38152, USA; email: \textit{vnikifrv@memphis.edu}}}
\maketitle

\begin{abstract}
Let $\mathcal{P}$ be an infinite hereditary property of graphs. Define%
\[
\pi\left(  \mathcal{P}\right)  =\lim_{n\rightarrow\infty}\binom{n}{2}^{-1}%
\max\{e\left(  G\right)  :\text{ }G\in\mathcal{P}\text{ and }v\left(
G\right)  =n\}.\text{ \ \ }%
\]
In this note $\pi\left(  \mathcal{P}\right)  $ is determined for every
hereditary property $\mathcal{P}$.

The same problem is studied for a more general parameter $\lambda^{\left(
\alpha\right)  }\left(  G\right)  ,$ defined for every real number $\alpha
\geq1$ and every graph $G\ $as
\[
\lambda^{\left(  \alpha\right)  }\left(  G\right)  =\max_{\left\vert
x_{1}\right\vert ^{\alpha}\text{ }+\text{ }\left\vert x_{2}\right\vert
^{\alpha}\text{ }+\text{ }\cdots\text{ }+\text{ }\left\vert x_{n}\right\vert
^{\alpha}\text{ }=\text{ }1}2\sum_{\{u,v\}\in E\left(  G\right)  }x_{u}x_{v}.
\]

It is known that the limit%
\[
\lambda^{\left(  \alpha\right)  }\left(  \mathcal{P}\right)  =\lim
_{n\rightarrow\infty}n^{2/\alpha-2}\max\{\lambda^{\left(  \alpha\right)
}\left(  G\right)  :\text{ }G\in\mathcal{P}\text{ and }v\left(  G\right)
=n\}\text{ \ \ }%
\]
exists. A key result of the note is the equality%
\[
\lambda^{(\alpha)}\left(  \mathcal{P}\right)  =\pi\left(  \mathcal{P}\right)
,
\]
which holds for all $\alpha>1.$

\end{abstract}

\section{Introduction}

In this note we study problems stemming from the following one:\ \medskip

\emph{What is the maximum number of edges a graph of order }$n,$
\emph{belonging to some} \emph{hereditary property }$\mathcal{P}$%
\emph{.}\medskip

Let us recall that a hereditary property is a family of graphs closed under
taking induced subgraphs. For example, given a set of graphs $\mathcal{F},$
the family of all graphs that do not contain any $F\in\mathcal{F}$ as an
induced subgraph is a hereditary property, denoted as $Her\left(
\mathcal{F}\right)  .$

It seems that the above classically shaped problem has been disregarded in the
rich literature on hereditary properties, so we fill in this gap below.

Writing $\mathcal{P}_{n}$ for the set of all graphs of order $n$ in a property
$\mathcal{P},$ our problem now reads as: \emph{Given a hereditary property
}$\mathcal{P}$\emph{, find}
\begin{equation}
ex\left(  \mathcal{P},n\right)  =\max_{G\in\mathcal{P}_{n}}e\left(  G\right)
. \label{maxed}%
\end{equation}

Finding $ex\left(  \mathcal{P},n\right)  $ exactly seems hopeless for
arbitrary $\mathcal{P}$. A more feasible approach has been suggested by
Katona, Nemetz and Simonovits in \cite{KNS64} who proved the following fact:

\begin{proposition}
\label{proKNS}If $\mathcal{P}$ is a hereditary property, then the sequence
\[
\left\{  ex\left(  \mathcal{P},n\right)  \binom{n}{2}^{-1}\right\}
_{n=1}^{\infty}%
\]
is nonincreasing and so the limit
\[
\pi\left(  \mathcal{P}\right)  =\lim_{n\rightarrow\infty}ex\left(
\mathcal{P},n\right)  \binom{n}{2}^{-1}%
\]
always exists.
\end{proposition}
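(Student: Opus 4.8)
The plan is to establish the stated monotonicity by the classical vertex-deletion averaging trick, after which the existence of $\pi(\mathcal{P})$ follows at once because the sequence is nonnegative. Note first that since $\mathcal{P}$ is infinite and hereditary it contains graphs of every order $n\geq 1$ (an infinite family of finite graphs has members of unbounded order, and closure under induced subgraphs then fills in all smaller orders), so $\mathcal{P}_{n}\neq\emptyset$ and the maximum defining $ex(\mathcal{P},n)$ is attained.

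I would fix $n\geq 3$ and choose $G\in\mathcal{P}_{n}$ with $e(G)=ex(\mathcal{P},n)$. For each $v\in V(G)$ the induced subgraph $G-v$ lies in $\mathcal{P}_{n-1}$, hence $e(G-v)\leq ex(\mathcal{P},n-1)$. The only computation to carry out is the double count: an edge of $G$ is present in $G-v$ precisely when $v$ is not one of its two endpoints, so it is counted in exactly $n-2$ of the $n$ subgraphs $G-v$. Summing over $v$ gives
\[
(n-2)\,ex(\mathcal{P},n)=(n-2)\,e(G)=\sum_{v\in V(G)}e(G-v)\leq n\,ex(\mathcal{P},n-1).
\]
Dividing the inequality $(n-2)\,ex(\mathcal{P},n)\leq n\,ex(\mathcal{P},n-1)$ by $n(n-2)$ and then multiplying by $2/(n-1)$ turns it into
\[
\frac{ex(\mathcal{P},n)}{\binom{n}{2}}=\frac{2\,ex(\mathcal{P},n)}{n(n-1)}\leq\frac{2\,ex(\mathcal{P},n-1)}{(n-1)(n-2)}=\frac{ex(\mathcal{P},n-1)}{\binom{n-1}{2}},
\]
which is exactly the asserted inequality between consecutive terms of the sequence.

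This shows $\{ex(\mathcal{P},n)\binom{n}{2}^{-1}\}_{n\geq1}$ is nonincreasing; being bounded below by $0$ it therefore converges, and the limit is by definition $\pi(\mathcal{P})$. I do not expect a genuine obstacle here: the only points needing a line of care are the remark above that each $\mathcal{P}_{n}$ is nonempty (so the maximum is well defined) and the trivial small cases $n\leq 2$, where $\binom{n}{2}\leq 1$ and the inequality holds immediately, so the whole argument reduces to the single double count displayed above.
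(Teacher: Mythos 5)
Your proof is correct, and it is the classical Katona--Nemetz--Simonovits averaging argument: the paper itself states Proposition~\ref{proKNS} without proof, citing \cite{KNS64}, and the vertex-deletion double count you give is exactly the standard derivation of that result. The only points worth a footnote are ones you already flag: one needs $\mathcal{P}$ infinite (as the paper assumes throughout) so that each $\mathcal{P}_n$ is nonempty and the maximum is attained, and the $n=1$ term of the sequence is degenerate since $\binom{1}{2}=0$ --- a blemish of the statement rather than of your argument.
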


One of the aims of this paper is to establish $\pi\left(  \mathcal{P}\right)
$ for every $\mathcal{P},$ but our main interest is in extremal problems about
a different graph parameter, denoted by $\lambda^{\left(  \alpha\right)
}\left(  G\right)  $ and defined as follows: \emph{for every graph }%
$G$\emph{\ and every real number }$\alpha\geq1,$\emph{ let}
\[
\lambda^{\left(  \alpha\right)  }\left(  G\right)  =\max_{\left\vert
x_{1}\right\vert ^{\alpha}+\cdots+\left\vert x_{n}\right\vert ^{\alpha}%
=1}2\sum_{\left\{  u,v\right\}  \in E\left(  G\right)  }x_{u}x_{v}.
\]

Note first that $\lambda^{\left(  2\right)  }\left(  G\right)  $ is the
well-studied spectral radius of $G,$ and second, that $\lambda^{\left(
1\right)  }\left(  G\right)  $ is a another much studied parameter, known as
the Lagrangian of $G$. So $\lambda^{\left(  \alpha\right)  }\left(  G\right)
$ is a common generalization of two parameters that have been widely used in
extremal graph theory.

The parameter $\lambda^{\left(  \alpha\right)  }\left(  G\right)  $ has been
recently introduced and studied for uniform hypergraphs first, by Keevash,
Lenz and Mubayi in \cite{KLM13} and next by the author, in \cite{NikB}. Here
we shall study $\lambda^{\left(  \alpha\right)  }\left(  G\right)  $ in the
same setting as the number of edges in (\ref{maxed}). Thus, given a hereditary
property $P$, set%
\begin{equation}
\lambda^{\left(  \alpha\right)  }\left(  \mathcal{P},n\right)  =\max
_{G\in\mathcal{P}_{n}}\lambda^{\left(  \alpha\right)  }\left(  G\right)  .
\label{maxlam}%
\end{equation}

As with $ex\left(  \mathcal{P},n\right)  $ finding $\lambda^{\left(
\alpha\right)  }\left(  \mathcal{P},n\right)  $ seems hopeless for arbitrary
$\mathcal{P}$. So, to begin with, the following theorem has been proved in
\cite{NikB} as an analog to Proposition \ref{proKNS}.

\begin{theorem}
\label{thlim}Let $\alpha\geq1.$ If $\mathcal{P}$ is a hereditary property,
then the limit
\begin{equation}
\lambda^{\left(  \alpha\right)  }\left(  \mathcal{P}\right)  =\lim
_{n\rightarrow\infty}\lambda^{\left(  \alpha\right)  }\left(  \mathcal{P}%
,n\right)  n^{\left(  2/\alpha\right)  -2}%
\end{equation}
exists.
\end{theorem}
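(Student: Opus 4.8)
The plan is to show that the normalized sequence $a_n:=\lambda^{(\alpha)}(\mathcal{P},n)\,n^{(2/\alpha)-2}$ is bounded and ``almost nonincreasing'', and to read off the limit from these two facts. Boundedness is immediate: for any graph $G$ on $n$ vertices and any $x$ with $\sum_i|x_i|^\alpha=1$, the power-mean inequality gives $2\sum_{\{u,v\}\in E(G)}x_ux_v\le\bigl(\sum_i|x_i|\bigr)^2\le n^{2-2/\alpha}$, whence $\lambda^{(\alpha)}(G)\le n^{2-2/\alpha}$ and $a_n\le 1$. The heart of the matter is the inequality
\[
a_n\ \le\ a_{n-1}\Bigl(1+\tfrac{1}{n(n-2)}\Bigr)\qquad(n\ge 3),
\]
which I would prove by deleting one vertex from an extremal graph and accounting for the loss through the optimality conditions.

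Here is how that step would go. Fix $n\ge 3$, choose $G\in\mathcal{P}_n$ with $\lambda^{(\alpha)}(G)=\lambda^{(\alpha)}(\mathcal{P},n)=:\Lambda$, and let $x\ge 0$ be an optimal vector (nonnegativity costs nothing, since replacing $x_i$ by $|x_i|$ does not decrease the quadratic form), normalized by $\sum_i x_i^\alpha=1$. With $s_v:=\sum_{w\sim v}x_w$, the Karush--Kuhn--Tucker stationarity relation $2s_v=\mu\alpha x_v^{\alpha-1}$ on the support of $x$, together with $\sum_v x_v s_v=\Lambda$, forces $\mu=2\Lambda/\alpha$ and hence $x_v s_v=\Lambda x_v^\alpha$ for every vertex $v$. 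Now take $v$ with $x_v$ minimal, so $x_v^\alpha\le 1/n$, and restrict and renormalize $x$ on $G-v\in\mathcal{P}_{n-1}$. Because $2\sum_{\{a,b\}\in E(G-v)}x_ax_b=\Lambda-2x_vs_v=\Lambda(1-2x_v^\alpha)$, this produces
\[
\lambda^{(\alpha)}(\mathcal{P},n-1)\ \ge\ \lambda^{(\alpha)}(G-v)\ \ge\ \frac{\Lambda\,(1-2x_v^\alpha)}{(1-x_v^\alpha)^{2/\alpha}}\ =\ \Lambda\,\psi(x_v^\alpha),\qquad \psi(t):=\frac{1-2t}{(1-t)^{2/\alpha}}.
\]
A short derivative check shows $\psi$ is nonincreasing on $[0,\tfrac12]$ for every $\alpha\ge 1$ (the sign of $\psi'(t)$ is the sign of $(\tfrac2\alpha-2)+(2-\tfrac4\alpha)t$, which is $\le 0$ throughout $[0,\tfrac12]$), so $\psi(x_v^\alpha)\ge\psi(1/n)$; substituting $\Lambda=\lambda^{(\alpha)}(\mathcal{P},n)$ and cancelling the powers of $n$ and $n-1$ turns $\lambda^{(\alpha)}(\mathcal{P},n-1)\ge\Lambda\,\psi(1/n)$ into precisely the displayed recursion, in the equivalent form $a_{n-1}\ge a_n\cdot n(n-2)/(n-1)^2$.

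Finally I would harvest the limit. If $\liminf_n a_n=0$, then since the partial products $\prod_{3\le k\le n}(1+\tfrac1{k(k-2)})$ stay bounded, the recursion forces $a_n\to 0$. Otherwise $a_n\ge c>0$ for all large $n$; putting $P_n:=\prod_{k>n_0}(1+\tfrac1{k(k-2)})$, the sequence $a_n/P_n$ is nonincreasing for $n>n_0$ and bounded below by $c/P_\infty>0$, hence converges, and therefore so does $a_n=(a_n/P_n)\,P_n$. The step I expect to be the real obstacle is the vertex-deletion estimate: obtaining the exact identity $\Lambda-2x_vs_v=\Lambda(1-2x_v^\alpha)$ — which is what makes the bound sharp enough to survive the $n^{2-2/\alpha}$ scaling — genuinely uses the KKT conditions rather than a crude bound on $x_vs_v$, and one must check that the one-variable function $\psi$ is monotone uniformly in $\alpha\ge 1$; everything else is routine bookkeeping.
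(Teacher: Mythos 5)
Your argument is correct. The paper itself gives no proof of Theorem \ref{thlim} --- it is imported from \cite{NikB} --- but your proof is a sound, self-contained one, and it follows the same strategy that underlies the cited source: it is the $\lambda^{(\alpha)}$-analogue of the Katona--Nemetz--Simonovits monotonicity argument, implemented by deleting the vertex of minimum weight from an extremal pair $(G,x)$ and using the stationarity identity $x_vs_v=\Lambda x_v^{\alpha}$ to control the loss exactly. All the delicate points check out: the Lagrange/KKT relation holds on the support of the optimal (nonnegative) vector and is trivial off it, so $x_vs_v=\Lambda x_v^{\alpha}$ holds for every $v$; the minimal coordinate satisfies $x_v^{\alpha}\le 1/n\le 1/3<1/2$, so $\psi$ is evaluated where it is defined and where your derivative computation (sign of $(\tfrac2\alpha-2)+(2-\tfrac4\alpha)t$, nonpositive at both $t=0$ and $t=\tfrac12$, hence on the whole interval by linearity) applies; the algebra converting $\lambda^{(\alpha)}(\mathcal{P},n-1)\ge\Lambda\psi(1/n)$ into $a_{n-1}\ge a_n\,n(n-2)/(n-1)^2$ is right; and since $\sum_k 1/(k(k-2))$ converges, dividing by the partial products yields a nonincreasing, nonnegative sequence, so the limit exists (your case split on $\liminf a_n$ is actually unnecessary --- $a_n/P_n$ is nonincreasing and bounded below by $0$ in all cases). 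Two cosmetic remarks: one should note that an infinite hereditary property has $\mathcal{P}_n\neq\emptyset$ for every $n$, so that $\lambda^{(\alpha)}(\mathcal{P},n)$ is defined; and for $\alpha=1$ the constraint function is not differentiable at zero coordinates, but since you only invoke stationarity on the support this causes no trouble.
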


Thus, a natural question is to find $\lambda^{\left(  \alpha\right)  }\left(
\mathcal{P}\right)  $ for every $\mathcal{P}$ and every $\alpha\geq1.$ The
main goal of this note to answer this question completely.

It turns out that $\lambda^{\left(  \alpha\right)  }\left(  \mathcal{P}%
\right)  $ and $\pi\left(  \mathcal{P}\right)  $ are closely related. For
example, results proved in \cite{NikB} imply that $\lambda^{\left(  a\right)
}\left(  \mathcal{P}\right)  \geq\pi\left(  \mathcal{P}\right)  $ for every
$\mathcal{P}$ and every $\alpha\geq1,$ moreover, if $\alpha\geq2,$ then
$\lambda^{\left(  a\right)  }\left(  \mathcal{P}\right)  =\pi\left(
\mathcal{P}\right)  .$ In this note we shall extend this relation to: \emph{if
}$\alpha>1,$\emph{ then }$\lambda^{\left(  a\right)  }\left(  \mathcal{P}%
\right)  =\pi\left(  \mathcal{P}\right)  .$

\section{Main results}

For notation and concepts undefined here, the reader is referred to
\cite{Bol98}.

Note first that every hereditary property $\mathcal{P}$ is trivially
characterized by $\mathcal{P}=Her\left(  \overline{\mathcal{P}}\right)  ,$
where $\overline{\mathcal{P}}$ is the family of all graphs that are not in
$\mathcal{P};$ however, typically $\mathcal{P}$ can be given as $\mathcal{P}%
=Her\left(  \mathcal{F}\right)  $ for some $\mathcal{F}$ that is only a small
fraction of $\overline{\mathcal{P}}.$

Recall next that a complete $r$-partite graph is a graph whose vertices are
split into $r$ nonempty independent sets so that all edges between vertices of
different classes are present. In particular, a $1$-partite graph is just a
set of independent vertices.

To characterize $\pi\left(  \mathcal{P}\right)  $ and $\lambda^{\left(
\alpha\right)  }\left(  \mathcal{P}\right)  $ we shall need two numeric
parameters defined for every family of graphs $\mathcal{F}.$ First, let
\[
\underline{\omega}\left(  \mathcal{F}\right)  =\left\{
\begin{array}
[c]{l}%
0,\text{ if }\mathcal{F}\text{ contains no cliques;}\\
\min\left\{  r:\text{ }K_{r}\in\mathcal{F}\right\}  ,\text{ otherwise,}%
\end{array}
\right.
\]
and second, let
\[
\beta\left(  \mathcal{F}\right)  =\left\{
\begin{array}
[c]{l}%
0,\text{ if }\mathcal{F}\text{ contains no complete partite graphs;}\\
\min\left\{  r:\mathcal{F}\text{ contains a complete }r\text{-partite
graph}\right\}  ,\text{ otherwise.}%
\end{array}
\right.
\]

The parameters $\underline{\omega}\left(  \mathcal{F}\right)  $ and
$\beta\left(  \mathcal{F}\right)  $ are quite informative about the hereditary
property $Her\left(  \mathcal{F}\right)  ,$ as seen first in the following observation.

\begin{proposition}
\label{proinf}If the property $\mathcal{P}$ $=Her\left(  \mathcal{F}\right)  $
is infinite, then $\underline{\omega}\left(  \mathcal{F}\right)  =0$ or
$\underline{\omega}\left(  \mathcal{F}\right)  \geq2$ and $\beta\left(
\mathcal{F}\right)  \geq2.$
\end{proposition}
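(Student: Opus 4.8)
The plan is to argue by contradiction, showing that if $\mathcal{P} = Her(\mathcal{F})$ is infinite, then the only excluded possibility is $\underline{\omega}(\mathcal{F}) = 1$, and separately that $\underline{\omega}(\mathcal{F}) \geq 2$ forces $\beta(\mathcal{F}) \geq 2$. First I would dispose of the case $\underline{\omega}(\mathcal{F}) = 1$: this means $K_1 \in \mathcal{F}$, i.e. the single-vertex graph is a forbidden induced subgraph. But every graph on at least one vertex contains $K_1$ as an induced subgraph, so $Her(\mathcal{F})$ would contain only the empty graph (on zero vertices), hence be finite — contradiction. Thus $\underline{\omega}(\mathcal{F}) = 0$ or $\underline{\omega}(\mathcal{F}) \geq 2$.

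It remains to treat the case $\underline{\omega}(\mathcal{F}) \geq 2$ and show $\beta(\mathcal{F}) \geq 2$, equivalently $\beta(\mathcal{F}) \neq 1$ (the value $0$ being fine). Suppose for contradiction that $\beta(\mathcal{F}) = 1$. By definition this means $\mathcal{F}$ contains a complete $1$-partite graph, which is simply a set of $k$ independent vertices $\overline{K_k}$ for some $k \geq 1$. Then every graph in $Her(\mathcal{F})$ must avoid $\overline{K_k}$ as an induced subgraph; in particular no graph in $\mathcal{P}$ has an independent set of size $k$, so every $G \in \mathcal{P}$ has independence number $\alpha(G) \leq k-1$.

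The key step is then a Ramsey-type argument. Since $\underline{\omega}(\mathcal{F}) \geq 2$, either $\mathcal{F}$ contains no clique at all — in which case I instead use the following uniform bound — or the smallest clique in $\mathcal{F}$ is $K_r$ with $r \geq 2$, forcing $\omega(G) \leq r-1$ for all $G \in \mathcal{P}$. Combining $\alpha(G) \leq k-1$ with $\omega(G) \leq r-1$ and Ramsey's theorem gives $v(G) < R(r, k)$ for every $G \in \mathcal{P}$, so $\mathcal{P}$ is finite, a contradiction. If instead $\mathcal{F}$ contains no clique (but $\underline{\omega}(\mathcal{F}) \geq 2$ is vacuously recorded as... actually $\underline{\omega}(\mathcal{F}) = 0$ in that case), so the hypothesis $\underline{\omega}(\mathcal{F}) \geq 2$ genuinely supplies a clique bound, and the Ramsey argument applies directly. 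Hence $\beta(\mathcal{F}) \neq 1$, completing the proof.

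The main obstacle is simply being careful with the trichotomy on $\underline{\omega}(\mathcal{F})$ and making sure the definitions of the two parameters interact correctly: one must note that $\overline{K_k}$ is itself a complete $1$-partite graph (so $\beta(\mathcal{F}) = 1$ really does encode "some independent set is forbidden") and that a nonempty clique in $\mathcal{F}$ with $\underline{\omega}(\mathcal{F}) \geq 2$ genuinely bounds $\omega(G)$ from above on all of $\mathcal{P}$. Once these two observations are in place, Ramsey's theorem does the rest with no computation.
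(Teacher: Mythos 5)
Your argument is correct and is essentially the paper's own proof: rule out $\underline{\omega}\left(\mathcal{F}\right)=1$ directly, then use Ramsey's theorem on $r\left(K_r,K_s\right)$ to show that forbidding both a clique and an independent set forces every member of $\mathcal{P}$ to have bounded order, contradicting infiniteness. One small slip: you assert that $\beta\left(\mathcal{F}\right)=0$ would be ``fine,'' but the target conclusion is $\beta\left(\mathcal{F}\right)\geq2$, so you also need the one-line observation that $K_r\in\mathcal{F}$ is itself a complete $r$-partite graph, whence $\beta\left(\mathcal{F}\right)>0$; the paper makes this explicit.
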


\begin{proof}
Suppose that $\underline{\omega}\left(  \mathcal{F}\right)  \neq0.$ If
$\underline{\omega}\left(  \mathcal{F}\right)  =1,$ then $\mathcal{P}$ is
empty, so we can suppose that $\underline{\omega}\left(  \mathcal{F}\right)
\geq2.$ This implies that $\beta\left(  \mathcal{F}\right)  >0,$ as
$\mathcal{F}$ contains $K_{r}$ for some $r\geq2$ and $K_{r}$ is a complete
$r$-partite graph. If $\beta\left(  \mathcal{F}\right)  =1,$ then
$\mathcal{F}$ contains a graph $G$ consisting of isolated vertices, say $G$ is
on $s$ vertices. If $\mathcal{P}$ is infinite, choose a member $G\in
\mathcal{P}$ with $v\left(  G\right)  \geq r\left(  K_{r},K_{s}\right)  ,$
where $r\left(  K_{r},K_{s}\right)  $ is the Ramsey number of $K_{r}$ vs.
$K_{s}.$ Then either $G$ contains a $K_{r}$ or an independent set on $s$
vertices, both of which are forbidden. It turns out that $\beta\left(
\mathcal{F}\right)  \geq2,$ proving Proposition \ref{proinf}.
\end{proof}

\medskip

Clearly the study of (\ref{maxed}) and (\ref{maxlam}) makes sense only if
$\mathcal{P}$ is infinite and Proposition \ref{proinf} provides necessary
condition for this property of $\mathcal{P}$. The following theorem completely
characterizes $\pi\left(  \mathcal{P}\right)  .$

\begin{theorem}
\label{thpi}Let $\mathcal{F}$ be a family of graphs. If the property
$\mathcal{P}=Her\left(  \mathcal{F}\right)  $ is infinite, then
\[
\pi\left(  \mathcal{P}\right)  =\left\{
\begin{array}
[c]{l}%
1,\text{ if }\underline{\omega}\left(  \mathcal{F}\right)  =0;\\
1-\frac{1}{\beta\left(  \mathcal{F}\right)  -1},\text{ otherwise.}%
\end{array}
\right.  .
\]

\end{theorem}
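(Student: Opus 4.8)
The plan is to split into the two cases defined by $\underline{\omega}(\mathcal{F})$, and in each case prove matching lower and upper bounds on $\pi(\mathcal{P})$.

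\medskip

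\textbf{Case $\underline{\omega}(\mathcal{F})=0$.} Here $\mathcal{F}$ contains no clique $K_r$ at all, so in particular no graph in $\mathcal{F}$ is a single edge $K_2$ or a single vertex $K_1$; thus every complete graph $K_n$ belongs to $\mathcal{P}$. Since $K_n$ is induced-subgraph-closed only in the trivial sense, what matters is that $K_n \in \mathcal{P}_n$ for all $n$, which gives $ex(\mathcal{P},n) \geq \binom{n}{2}$, hence $\pi(\mathcal{P}) \geq 1$. The reverse inequality $\pi(\mathcal{P}) \leq 1$ is immediate since $e(G) \leq \binom{v(G)}{2}$ always. So $\pi(\mathcal{P}) = 1$.

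\medskip

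\textbf{Case $\underline{\omega}(\mathcal{F}) \geq 2$.} By Proposition \ref{proinf} we also have $\beta(\mathcal{F}) \geq 2$; write $b = \beta(\mathcal{F})$. For the lower bound, note that no complete $(b-1)$-partite graph lies in $\mathcal{F}$ (by minimality of $b$), and hence \emph{every} complete $(b-1)$-partite graph belongs to $\mathcal{P}$, because an induced subgraph of a complete $(b-1)$-partite graph is again complete $r$-partite for some $r \leq b-1$, and none of those are in $\mathcal{F}$. Taking the balanced complete $(b-1)$-partite graph on $n$ vertices (the Turán graph $T_{b-1}(n)$) gives $ex(\mathcal{P},n) \geq e(T_{b-1}(n)) = \left(1 - \tfrac{1}{b-1}\right)\binom{n}{2} - o(n^2)$, so $\pi(\mathcal{P}) \geq 1 - \tfrac{1}{b-1}$.

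\medskip

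\textbf{The upper bound} is the main obstacle. I need to show that a graph $G \in \mathcal{P}_n$ cannot have substantially more than $e(T_{b-1}(n))$ edges. The natural tool is the Erd\H{o}s--Stone--Simonovits theorem: if $e(G) \geq \left(1 - \tfrac{1}{b-1} + \varepsilon\right)\binom{n}{2}$ and $n$ is large, then $G$ contains as a (not necessarily induced) subgraph a complete $b$-partite graph $K_{t,t,\dots,t}$ with $t$ parts of any fixed size $t$. However, I need an \emph{induced} forbidden configuration, so a direct application is not enough. The fix is to choose a graph $H \in \mathcal{F}$ that is complete $b$-partite — which exists by definition of $\beta(\mathcal{F})$ — say with part sizes at most $s$, and show that a large enough complete $b$-partite subgraph of $G$ forces an induced copy of $H$ inside $G$. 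This is where one invokes a Ramsey-type argument: inside a huge complete $b$-partite subgraph $K_{N,\dots,N}$ of $G$ (with $N$ large as a function of $s$ and $b$), one can use Ramsey's theorem to locate $b$ large subsets, one in each part, so that the bipartite graph induced by $G$ between any two of them is either complete or empty; since each pair already spans a complete bipartite graph in $G$ (they come from different parts of the complete $b$-partite subgraph), they must be complete, and within each part one similarly finds a large independent set in $G$ (independent because $K_r \notin \mathcal{F}$ would otherwise be available, or more carefully, by iterating Ramsey and using that $G$ has bounded clique number — which follows from $\underline{\omega}(\mathcal{F}) \geq 2$ together with $G \in \mathcal{P}$). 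Cleaning this up yields a large induced complete $b$-partite subgraph of $G$ with all parts independent and of size $\geq s$, which contains an induced copy of $H$, contradicting $G \in \mathcal{P} = Her(\mathcal{F})$. Therefore $e(G) < \left(1 - \tfrac{1}{b-1} + \varepsilon\right)\binom{n}{2}$ for all large $n$, giving $\pi(\mathcal{P}) \leq 1 - \tfrac{1}{b-1}$, and combined with the lower bound this completes the proof. The delicate point to get right is the double Ramsey argument converting a non-induced complete multipartite subgraph into an induced one while keeping the parts independent; one must make sure the clique number of $G$ is genuinely bounded, which uses that $\underline{\omega}(\mathcal{F})$ is finite and that $G$ avoids $K_{\underline{\omega}(\mathcal{F})}$ as an induced — equivalently, as a — subgraph.
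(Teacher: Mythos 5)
Your proposal is correct and follows essentially the same route as the paper: the Tur\'an graph $T_{\beta-1}(n)$ for the lower bound, and Erd\H{o}s--Stone plus a Ramsey argument inside each part of the resulting complete $\beta$-partite subgraph (using that $G$ is $K_{\underline{\omega}(\mathcal{F})}$-free, so each part of size $r(K_{\underline{\omega}(\mathcal{F})},K_s)$ contains an independent set of size $s$) to produce an induced copy of some complete $\beta$-partite $F\in\mathcal{F}$. The only difference is cosmetic: the cross-part Ramsey step you sketch is unnecessary, since those pairs already span complete bipartite graphs, exactly as you yourself observe.
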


\begin{proof}
Indeed, since $\mathcal{P}$ is infinite, Proposition \ref{proinf} implies that
$\underline{\omega}\left(  \mathcal{F}\right)  =0$ or $\underline{\omega
}\left(  \mathcal{F}\right)  \geq2$ and $\beta\left(  \mathcal{F}\right)
\geq2.$ If $\underline{\omega}\left(  \mathcal{F}\right)  =0,$ then $K_{n}%
\in\mathcal{P}_{n},$ because all subgraphs of $K_{n}$ are complete and do not
belong to $\mathcal{F}$. Therefore,
\[
ex\left(  \mathcal{P},n\right)  =\binom{n}{2},
\]
and so, $\pi\left(  \mathcal{P}\right)  =1.$ Assume that $\underline{\omega
}\left(  \mathcal{F}\right)  \geq2$ and $\beta\left(  \mathcal{F}\right)
\geq2,$ and set for short $r=\underline{\omega}\left(  \mathcal{F}\right)
\geq2$ and $\beta=\beta\left(  \mathcal{F}\right)  .$ Next, we shall prove
that $T_{\beta-1}\left(  n\right)  \in\mathcal{P}_{n},$ where $T_{\beta
-1}\left(  n\right)  $ is the complete $\left(  \beta-1\right)  $-partite
Tur\'{a}n graph of order $n.$ Indeed all subgraphs of $T_{\beta-1}\left(
n\right)  $ are complete $r$-partite graphs for some $r\leq\beta-1$, so should
one of them belong to $\mathcal{F},$ we would have $\beta\left(
\mathcal{F}\right)  \leq\beta-1=\beta\left(  \mathcal{F}\right)  -1,$ a
contradiction. Therefore,
\[
ex\left(  \mathcal{P},n\right)  \geq e\left(  T_{\beta-1}\left(  n\right)
\right)  =\left(  1-\frac{1}{\beta-1}+o\left(  1\right)  \right)  \binom{n}%
{2},
\]
and so%
\[
\pi\left(  \mathcal{P}\right)  \geq1-\frac{1}{\beta\left(  \mathcal{F}\right)
-1}.
\]

To finish the proof we shall prove the opposite inequality. Let $F\in
\mathcal{F}$ be a complete $\beta$-partite graph, known to exist by the
definition of $\beta\left(  \mathcal{F}\right)  $ and let $s$ be the maximum
of the sizes of its vertex classes.

Now assume that $\varepsilon>0$ and set $t=r\left(  K_{r},K_{s}\right)  ,$
where $r\left(  K_{r},K_{s}\right)  $ is the Ramsey number of $K_{r}$ vs.
$K_{s}.$ If $n$ is large enough and $G\in\mathcal{P}_{n}$ satisfies
\[
e\left(  G\right)  >\left(  1-\frac{1}{\beta\left(  \mathcal{F}\right)
-1}+\varepsilon\right)  \binom{n}{2},
\]
then by the theorem of Erd\H{o}s and Stone \cite{ErSt46}, $G$ contains a
subgraph $G_{0}=K_{\beta}\left(  t\right)  ,$ that is to say, a complete
$\beta$-partite graph with $t$ vertices in each vertex class. Since $K_{r}%
\in\mathcal{F}$, we see that $G_{0}$ contains no $K_{r},$ hence each vertex
class of $G_{0}$ contains an independent set of size $s,$ and so $G$ contains
an induced subgraph $K_{\beta}\left(  s\right)  ,$ which in turn contains an
induced copy of $F.$ Hence, if $n$ is large enough and $G\in\mathcal{P}_{n},$
then
\[
e\left(  G\right)  \binom{n}{2}^{-1}\leq1-\frac{1}{\beta\left(  \mathcal{F}%
\right)  -1}+\varepsilon.
\]
This inequality implies that
\[
\pi\left(  \mathcal{P}\right)  \leq1-\frac{1}{\beta\left(  \mathcal{F}\right)
-1},
\]
completing the proof.
\end{proof}

We continue now with establishing $\lambda^{\left(  \alpha\right)  }\left(
\mathcal{P}\right)  $ for $\alpha>1.$ The proof of our key Theorem \ref{thlam}
relies on several other results, some of which are stated within the proof
itself. We give two other before the theorem. The first one follows from a
result in \cite{NikB}, but for reader's sake we reproduce its short proof here.

\begin{theorem}
\label{maxlK}Let $\alpha\geq1.$ If $G$ is a graph with $m$ edges and $n$
vertices, with no $K_{r+1},$ then%
\begin{equation}
\lambda^{\left(  \alpha\right)  }\left(  G\right)  \leq\left(  1-\frac{1}%
{r}\right)  ^{1/\alpha}\left(  2m\right)  ^{1-1/\alpha} \label{in1}%
\end{equation}
and
\begin{equation}
\lambda^{\left(  \alpha\right)  }\left(  G\right)  \leq\left(  1-\frac{1}%
{r}\right)  n^{2-2/\alpha}. \label{in2}%
\end{equation}

\end{theorem}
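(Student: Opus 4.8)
The plan is to bound $\lambda^{(\alpha)}(G)$ by comparing the $\alpha$-norm constraint to the $2$-norm via Hölder's inequality, and then invoke the classical Motzkin--Straus / Turán-type bound on the Lagrangian form $2\sum_{uv\in E} x_ux_v$ of a $K_{r+1}$-free graph. Let $\mathbf{x}=(x_1,\dots,x_n)$ be an optimal vector for $\lambda^{(\alpha)}(G)$, so $\sum_i |x_i|^\alpha = 1$ and $\lambda^{(\alpha)}(G) = 2\sum_{uv\in E}x_ux_v$. Since replacing $x_i$ by $|x_i|$ only increases the sum, we may assume $x_i\ge 0$ for all $i$. Write $s = \sum_i x_i^2$. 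Then $\mathbf{y} = \mathbf{x}/\sqrt{s}$ is a unit vector in the $2$-norm, so
\[
2\sum_{uv\in E} x_u x_v = s\cdot 2\sum_{uv\in E} y_u y_v \le s\cdot \lambda^{(2)}(G).
\]
Because $G$ has no $K_{r+1}$, the Motzkin--Straus theorem (equivalently, the statement that the Lagrangian of a $K_{r+1}$-free graph is at most that of $K_r$, which is $1-1/r$, applied after renormalizing $\mathbf{y}$ to the simplex; or directly the Turán-type spectral bound) gives $\lambda^{(2)}(G)\le 1-\tfrac1r$ in the $2$-norm normalization $\sum y_i^2 = 1$ — here one should be careful: in the $\sum y_i^2=1$ normalization the spectral radius of $K_r$ is $r-1$, not $1-1/r$, so instead I will normalize on the simplex. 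Concretely, set $z_i = x_i^2 / s$ so that $\sum_i z_i = 1$, and note $2\sum_{uv\in E} x_u x_v \le 2\sum_{uv\in E} \tfrac{x_u^2+x_v^2}{2}\cdot(\text{no})$ — this crude step loses too much, so I instead keep the Rayleigh form: $2\sum_{uv} y_u y_v$ with $\sum y_i^2 = 1$ is at most the largest eigenvalue of the adjacency matrix, and by the $K_{r+1}$-free spectral Turán bound (Nikiforov) this is at most $(1-\tfrac1r)\cdot(\text{something involving }n)$. Cleanest is to go through $2m$ directly.

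For inequality \eqref{in1}: with $x_i\ge 0$ and $\sum x_i^\alpha = 1$, apply the power-mean / Hölder inequality to get $s = \sum x_i^2 \le \bigl(\sum x_i^\alpha\bigr)^{2/\alpha} \cdot n^{1-2/\alpha}$ — no, this brings in $n$, which is wrong for \eqref{in1}. Instead I bound $2\sum_{uv\in E} x_u x_v$ by splitting it as $\bigl(2\sum_{uv\in E} x_u x_v\bigr)$ and noting each edge contributes $x_u x_v \le \tfrac12(x_u^2 + x_v^2)$ only loses a factor; the right tool is: among nonnegative $\mathbf{x}$ with fixed support and fixed $\sum x_i^\alpha$, by the Motzkin--Straus-type analysis of $\lambda^{(\alpha)}$ carried out in \cite{NikB}, the maximum of $2\sum_{uv}x_ux_v$ over a $K_{r+1}$-free graph is attained on a $K_r$ (or its blow-up) with all weights equal. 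On $K_r$ with $x_i = r^{-1/\alpha}$ we get $\sum x_i^\alpha = 1$ and $2\sum_{uv} x_u x_v = 2\binom{r}{2} r^{-2/\alpha} = (r-1) r^{1-2/\alpha} = (1-\tfrac1r) r^{2-2/\alpha}$. Since a $K_{r+1}$-free graph on $m$ edges has, in the worst case, $r$ vertices only when $m = \binom r2$; for general $m$ one scales: the bound $\lambda^{(\alpha)}(G) \le (1-\tfrac1r)^{1/\alpha}(2m)^{1-1/\alpha}$ is exactly what one gets from the convexity argument in \cite{NikB} interpolating between edge count and clique number. So the step I would actually write is: quote the relevant lemma from \cite{NikB} giving $\lambda^{(\alpha)}(G)^{\alpha} \le (1-\tfrac1r)(2m)^{\alpha-1}$, rearrange to \eqref{in1}.

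For inequality \eqref{in2}: combine \eqref{in1} with the trivial bound $2m \le n^2(1-\tfrac1r)$ from Turán's theorem (a $K_{r+1}$-free graph has at most $(1-\tfrac1r)n^2/2$ edges). Substituting, $\lambda^{(\alpha)}(G) \le (1-\tfrac1r)^{1/\alpha}\bigl((1-\tfrac1r)n^2\bigr)^{1-1/\alpha} = (1-\tfrac1r)^{1/\alpha + 1 - 1/\alpha} n^{2-2/\alpha} = (1-\tfrac1r) n^{2-2/\alpha}$, which is \eqref{in2}.

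The main obstacle is \eqref{in1}: getting the sharp constant $(1-\tfrac1r)^{1/\alpha}$ rather than something weaker requires the full strength of the weighted Motzkin--Straus argument for $\lambda^{(\alpha)}$ — one must show that pushing weight onto a maximum clique (and equalizing it there) is optimal, which uses that for $\alpha>1$ the function $t\mapsto t^\alpha$ is strictly convex so an optimal $\mathbf{x}$ can be "symmetrized" on cliques, and that the $K_{r+1}$-freeness forbids spreading onto any larger clique. Since the excerpt says this "follows from a result in \cite{NikB}" and promises a short proof, I would simply cite that result, reproduce the one-paragraph Lagrange-multiplier / symmetrization argument, and then do the two-line deductions above for \eqref{in1} and \eqref{in2}.
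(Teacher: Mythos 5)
Your derivation of \eqref{in2} from \eqref{in1} via Tur\'an's theorem is exactly what the paper does, and that part is fine. The problem is \eqref{in1}: you never actually prove it. After several abandoned attempts (comparison with the $2$-norm, a power-mean step that brings in $n$, the AM--GM split per edge), you end by saying you would ``quote the relevant lemma from \cite{NikB}'' giving $\lambda^{(\alpha)}(G)^{\alpha}\leq(1-\tfrac1r)(2m)^{\alpha-1}$ --- but that lemma \emph{is} inequality \eqref{in1}, so this is circular as a proof. You also suggest the argument needs a full weighted Motzkin--Straus symmetrization showing the optimum concentrates on a clique; it does not.

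The missing idea is a two-line combination of H\"older's inequality and the ordinary Motzkin--Straus theorem, applied to the \emph{edge sum} rather than to the vertex weights. With $\mathbf{x}$ optimal and (as you correctly note) $x_i\geq0$, $\sum_i x_i^{\alpha}=1$, view $2\sum_{uv\in E}x_ux_v$ as a sum of $2m$ nonnegative terms and apply H\"older (Jensen) with exponents $\alpha$ and $\alpha/(\alpha-1)$:
\[
2\sum_{\{u,v\}\in E(G)}x_ux_v\leq\left(2m\right)^{1-1/\alpha}\left(2\sum_{\{u,v\}\in E(G)}x_u^{\alpha}x_v^{\alpha}\right)^{1/\alpha}.
\]
Now the numbers $z_i=x_i^{\alpha}$ are nonnegative and sum to $1$, i.e., they lie on the simplex, so the classical Motzkin--Straus theorem for the $K_{r+1}$-free graph $G$ gives $2\sum_{\{u,v\}\in E(G)}z_uz_v\leq1-\tfrac1r$. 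Substituting yields \eqref{in1} immediately, with no Lagrange multipliers, no clique symmetrization, and no claim about where the optimum is attained. This is the step your proposal circles around (you have both ingredients --- the simplex normalization and Motzkin--Straus --- but you apply the normalization to $x_i^2/s$ instead of to $x_i^{\alpha}$, and you never apply H\"older over the edge set), so as written the proof of \eqref{in1} is a genuine gap.
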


\begin{proof}
Indeed, let $\mathbf{x}=\left(  x_{1},\ldots,x_{n}\right)  $ be a vector such
that $\left\vert x_{1}\right\vert ^{\alpha}+\cdots+\left\vert x_{n}\right\vert
^{\alpha}=1$ and
\[
\lambda^{\left(  \alpha\right)  }\left(  G\right)  =2\sum_{\left\{
u,v\right\}  \in E\left(  G\right)  }x_{u}x_{v}.
\]
Applying Jensen's inequality, we see that
\begin{align*}
\lambda^{\left(  \alpha\right)  }\left(  G\right)   &  =2\sum_{\left\{
u,v\right\}  \in E\left(  G\right)  }x_{u}x_{v}\leq2\sum_{\left\{
u,v\right\}  \in E\left(  G\right)  }\left\vert x_{u}\right\vert \left\vert
x_{v}\right\vert \\
&  \leq\left(  2m\right)  ^{1-1/\alpha}\left(  2\sum_{\left\{  u,v\right\}
\in E\left(  G\right)  }\left\vert x_{u}\right\vert ^{\alpha}\left\vert
x_{v}\right\vert ^{\alpha}\right)  ^{1/\alpha}.
\end{align*}
But by the result of Motzkin and Straus \cite{MoSt65}, we have
\[
2\sum_{\left\{  u,v\right\}  \in E\left(  G\right)  }\left\vert x_{u}%
\right\vert ^{\alpha}\left\vert x_{v}\right\vert ^{\alpha}\leq1-\frac{1}{r},
\]
and inequality (\ref{in1}) follows. Now inequality (\ref{in2}) follows from
(\ref{in1}) by Tur\'{a}n's theorem $2m<\left(  1-1/r\right)  n^{2}.$
\end{proof}

We shall need also the following proposition (Proposition 29, \cite{NikB})
whose proof we omit.

\begin{proposition}
\label{pro10}Let $\alpha\leq1,$ $k>1$ and $G_{1}$ and $G_{2}$ be graphs on the
same vertex set. If $G_{1}$ and $G_{2}$ differ in at most $k$ edges,then%
\[
\left\vert \lambda^{\left(  \alpha\right)  }\left(  G_{1}\right)
-\lambda^{\left(  \alpha\right)  }\left(  G_{2}\right)  \right\vert
\leq\left(  2k\right)  ^{1-1/\alpha}.
\]

\end{proposition}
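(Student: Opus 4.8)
The first idea that comes to mind — bounding $|\lambda^{(\alpha)}(G_1)-\lambda^{(\alpha)}(G_2)|$ by inserting the common subgraph $G_1\cap G_2$ and applying the triangle inequality — does not work, and recognising this is the only real subtlety. Indeed $t\mapsto t^{1-1/\alpha}$ is concave with value $0$ at $t=0$, hence subadditive, so if the symmetric difference splits into $k_1$ edges added and $k_2$ edges deleted (with $k_1+k_2\le k$), that route only produces $(2k_1)^{1-1/\alpha}+(2k_2)^{1-1/\alpha}\ge(2k)^{1-1/\alpha}$, an estimate in the wrong direction. The plan is therefore to transport a single extremal vector between $G_1$ and $G_2$ rather than to compare each of them to a common subgraph.

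Assume $\alpha\ge1$ (for $\alpha=1$ one has $0\le\lambda^{(1)}(G)\le1=(2k)^{0}$ for every graph $G$, so there is nothing to prove). By symmetry in $G_1,G_2$ it suffices to show $\lambda^{(\alpha)}(G_1)\le\lambda^{(\alpha)}(G_2)+(2k)^{1-1/\alpha}$. Let $\mathbf x=(x_1,\dots,x_n)$ satisfy $|x_1|^{\alpha}+\cdots+|x_n|^{\alpha}=1$ and $\lambda^{(\alpha)}(G_1)=2\sum_{\{u,v\}\in E(G_1)}x_ux_v$; replacing each $x_i$ by $|x_i|$ leaves the constraint intact and cannot decrease $2\sum_{\{u,v\}\in E(G_1)}x_ux_v$, so we may assume $\mathbf x\ge\mathbf 0$. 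Let $H$ be the graph on the common vertex set of $G_1$ and $G_2$ with edge set $E(G_1)\setminus E(G_2)$, so that $e(H)\le k$; then
\[
\lambda^{(\alpha)}(G_1)=2\sum_{\{u,v\}\in E(G_1)\cap E(G_2)}x_ux_v\;+\;2\sum_{\{u,v\}\in E(H)}x_ux_v .
\]
Since $\mathbf x\ge\mathbf 0$, the first sum is at most $2\sum_{\{u,v\}\in E(G_2)}x_ux_v\le\lambda^{(\alpha)}(G_2)$ because $\mathbf x$ is a feasible vector for $G_2$, and the second sum is at most $\lambda^{(\alpha)}(H)$ because $\mathbf x$ is feasible for $H$ as well.

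It remains to bound $\lambda^{(\alpha)}(H)$ by $(2k)^{1-1/\alpha}$. As $H$ has at most $k$ edges it contains no $K_{r+1}$, where $r$ is its clique number, so inequality~(\ref{in1}) of Theorem~\ref{maxlK} yields
\[
\lambda^{(\alpha)}(H)\le\Bigl(1-\tfrac1r\Bigr)^{1/\alpha}\bigl(2e(H)\bigr)^{1-1/\alpha}\le(2k)^{1-1/\alpha},
\]
using $\bigl(1-1/r\bigr)^{1/\alpha}\le1$ together with $e(H)\le k$ and $1-1/\alpha\ge0$. Combining the two displays gives $\lambda^{(\alpha)}(G_1)\le\lambda^{(\alpha)}(G_2)+(2k)^{1-1/\alpha}$, and the symmetric inequality follows in exactly the same way, completing the proof. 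The main obstacle, as noted, is purely conceptual — seeing that the common-subgraph triangle inequality is too lossy and that one must instead carry a single extremal vector across; once this is done the argument is a one-line appeal to Theorem~\ref{maxlK}.
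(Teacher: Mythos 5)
Your proof is correct. Note that the paper itself gives no proof of Proposition~\ref{pro10} --- it is quoted from \cite{NikB} (Proposition 29) with the proof omitted --- so there is nothing in-text to compare against; but your argument is the natural one and almost certainly the intended one: transport the (WLOG nonnegative) extremal vector of $G_1$ to $G_2$, so that only the edges of $E(G_1)\setminus E(G_2)$ contribute to the loss, and bound that contribution by the $\lambda^{(\alpha)}$-value of the difference graph. Two small remarks. First, you correctly read the hypothesis $\alpha\leq 1$ as a typo for $\alpha\geq 1$; this is consistent with the exponent $1-1/\alpha$ and with the use of the proposition for $\alpha>1$ in Theorem~\ref{thlam}. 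Second, invoking Theorem~\ref{maxlK} (hence Motzkin--Straus) for the difference graph $H$ is slightly heavier machinery than needed: the Jensen step alone gives
\[
2\sum_{\{u,v\}\in E(H)}x_ux_v\leq\left(2e(H)\right)^{1-1/\alpha}\left(2\sum_{\{u,v\}\in E(H)}x_u^{\alpha}x_v^{\alpha}\right)^{1/\alpha}\leq\left(2k\right)^{1-1/\alpha}\left(\sum_{i}x_i^{\alpha}\right)^{2/\alpha}=\left(2k\right)^{1-1/\alpha},
\]
with no reference to clique numbers; but your route through inequality~(\ref{in1}) with $(1-1/r)^{1/\alpha}\leq1$ is equally valid. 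Your opening observation --- that passing through $G_1\cap G_2$ and using the triangle inequality is lossy because $t\mapsto t^{1-1/\alpha}$ is subadditive --- is accurate and worth recording, although in fact even that weaker route would suffice for the application in Theorem~\ref{thlam}, where only the order of magnitude in $\varepsilon$ matters.
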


Here is the main theorem about $\lambda^{\left(  \alpha\right)  }\left(
\mathcal{P}\right)  .$

\begin{theorem}
\label{thlam} Let $\alpha>1$ and let $\mathcal{F}$ be a family of graphs. If
the property $\mathcal{P}=Her\left(  \mathcal{F}\right)  $ is infinite, then
\[
\lambda^{\left(  \alpha\right)  }\left(  \mathcal{P}\right)  =\left\{
\begin{array}
[c]{l}%
1,\text{ if }\underline{\omega}\left(  \mathcal{F}\right)  =0;\\
1-\frac{1}{\beta\left(  \mathcal{F}\right)  -1},\text{ otherwise.}%
\end{array}
\right.  .
\]

\end{theorem}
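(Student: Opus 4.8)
The plan is to prove the two inequalities $\lambda^{(\alpha)}(\mathcal{P}) \ge \pi(\mathcal{P})$ and $\lambda^{(\alpha)}(\mathcal{P}) \le \pi(\mathcal{P})$ separately, since Theorem \ref{thpi} already identifies $\pi(\mathcal{P})$ with the claimed right-hand side. For the lower bound, I would simply exhibit the extremal graphs used in the proof of Theorem \ref{thpi}: when $\underline{\omega}(\mathcal{F})=0$ we have $K_n \in \mathcal{P}_n$, and testing the uniform vector $\mathbf{x}=(n^{-1/\alpha},\dots,n^{-1/\alpha})$ gives $\lambda^{(\alpha)}(K_n) \ge n(n-1)n^{-2/\alpha} = (1+o(1))n^{2-2/\alpha}$, so $\lambda^{(\alpha)}(\mathcal{P}) \ge 1$. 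When $\underline{\omega}(\mathcal{F}) \ge 2$ and $\beta=\beta(\mathcal{F}) \ge 2$, the Tur\'an graph $T_{\beta-1}(n) \in \mathcal{P}_n$, and the uniform vector again yields $\lambda^{(\alpha)}(T_{\beta-1}(n)) \ge 2e(T_{\beta-1}(n))n^{-2/\alpha} = (1-\tfrac{1}{\beta-1}+o(1))n^{2-2/\alpha}$. In both cases, multiplying by $n^{2/\alpha-2}$ and taking the limit gives the desired lower bound on $\lambda^{(\alpha)}(\mathcal{P})$.

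For the upper bound, the goal is to show that every $G \in \mathcal{P}_n$ with $n$ large satisfies $\lambda^{(\alpha)}(G) \le (\pi(\mathcal{P})+\varepsilon)n^{2-2/\alpha}$. The natural strategy is to reduce to the case where $G$ contains no $K_r$ with $r=\underline{\omega}(\mathcal{F})$ (when $\underline{\omega}(\mathcal{F})=0$ the bound $\lambda^{(\alpha)}(G) \le n^{2-2/\alpha}$ is immediate from testing against $\mathbf{x}$ with $|x_i|=n^{-1/\alpha}$, or more carefully from a Rayleigh-type estimate, so $\lambda^{(\alpha)}(\mathcal{P})\le 1$ trivially). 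Assume then $r \ge 2$ and $\beta \ge 2$. Since $K_r \in \mathcal{F}$, no $G \in \mathcal{P}$ contains $K_r$; applying inequality (\ref{in2}) of Theorem \ref{maxlK} with $r$ replaced by $r-1$ gives $\lambda^{(\alpha)}(G) \le (1-\tfrac{1}{r-1})n^{2-2/\alpha}$, which bounds $\lambda^{(\alpha)}(\mathcal{P})$ but by the wrong constant unless $\beta=r$. To get the sharp constant $1-\tfrac{1}{\beta-1}$, I would instead argue: if $\lambda^{(\alpha)}(G) > (1-\tfrac{1}{\beta-1}+\varepsilon)n^{2-2/\alpha}$, then by inequality (\ref{in1}) applied with $r-1$ in place of $r$, $G$ must have many edges — specifically $2e(G) \ge (\lambda^{(\alpha)}(G))^{\alpha/(\alpha-1)}(1-\tfrac{1}{r-1})^{-1/(\alpha-1)}$, which forces $e(G) > (1-\tfrac{1}{\beta-1}+\varepsilon')\binom{n}{2}$ for a suitable $\varepsilon' > 0$ once $n$ is large. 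Then the Erd\H{o}s--Stone argument from the proof of Theorem \ref{thpi} shows $G$ contains an induced $K_\beta(s)$ hence an induced copy of the forbidden complete $\beta$-partite graph $F$, contradicting $G \in \mathcal{P}$.

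The one technical subtlety is that the implication "$\lambda^{(\alpha)}(G)$ large $\Rightarrow$ $e(G)$ large" via (\ref{in1}) requires $\alpha > 1$ (so that $\alpha/(\alpha-1)$ is finite), which is exactly the hypothesis of the theorem and the place where the case $\alpha=1$ genuinely differs. I expect this reduction — converting a spectral-type lower bound on $\lambda^{(\alpha)}(G)$ into an edge-count lower bound strong enough to trigger Erd\H{o}s--Stone with the correct density $1-\tfrac{1}{\beta-1}$ — to be the main point requiring care, since one must check that the exponents work out so that $\lambda^{(\alpha)}(G) \ge (1-\tfrac{1}{\beta-1}+\varepsilon)n^{2-2/\alpha}$ really does yield $e(G) \ge (1-\tfrac{1}{\beta-1}+\varepsilon')\binom{n}{2}$ with $\varepsilon' > 0$. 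Proposition \ref{pro10} is presumably not needed here (it concerns $\alpha \le 1$), but if a robustness/edge-deletion step is wanted it could be invoked. Combining the two inequalities with Theorem \ref{thpi} completes the proof.
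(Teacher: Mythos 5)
Your lower bound is correct and is exactly the paper's (uniform test vector on $K_n$ or $T_{\beta-1}(n)$ gives $\lambda^{(\alpha)}(\mathcal{P})\ge\pi(\mathcal{P})$, then invoke Theorem \ref{thpi}). The gap is in the upper bound, at precisely the step you flagged as "the main point requiring care": the conversion of a lower bound on $\lambda^{(\alpha)}(G)$ into an edge count strong enough to trigger Erd\H{o}s--Stone at density $1-\frac{1}{\beta-1}$ does \emph{not} work out. Write $c=1-\frac{1}{\beta-1}$ and $d=1-\frac{1}{r-1}$ with $r=\underline{\omega}(\mathcal{F})$. Since $K_r$ is itself a complete $r$-partite graph, $\beta\le r$, so $c\le d$, and the inequality can be strict: for $\mathcal{F}=\{K_4,K_{2,2,2}\}$ one has $r=4$, $\beta=3$, yet $\mathcal{P}$ is infinite (it contains all $T_2(n)$) and its members may contain triangles. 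Your inequality $2e(G)\ge\lambda^{(\alpha)}(G)^{\alpha/(\alpha-1)}d^{-1/(\alpha-1)}$ is correctly derived from (\ref{in1}), but starting from $\lambda^{(\alpha)}(G)\ge(c+\varepsilon)n^{2-2/\alpha}$ it yields $2e(G)\ge(c+\varepsilon)^{\alpha/(\alpha-1)}d^{-1/(\alpha-1)}n^{2}$, and as $\varepsilon\to0$ this constant tends to $c^{\alpha/(\alpha-1)}d^{-1/(\alpha-1)}=c\,(c/d)^{1/(\alpha-1)}$, which is strictly \emph{less} than $c$ whenever $0<c<d$. Concretely, with $\beta=3$, $r=4$, $\alpha=2$ you would need $(\tfrac12+\varepsilon)^2\cdot\tfrac32>\tfrac12$, i.e.\ $\varepsilon>\tfrac{1}{\sqrt{3}}-\tfrac12\approx0.077$, so no contradiction arises for small $\varepsilon$; your route only proves $\lambda^{(\alpha)}(\mathcal{P})\le d^{1/\alpha}c^{1-1/\alpha}$, which exceeds the claimed value whenever $r>\beta\ge3$. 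The root cause is that Theorem \ref{maxlK} can only be applied with the clique that $G$ actually excludes, namely $K_r$, whereas the sharp constant is governed by $K_\beta$, and graphs in $\mathcal{P}$ may contain many copies of $K_\beta$.

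The paper closes exactly this gap with a different mechanism: it first shows that any $G\in\mathcal{P}_n$ has $k_\beta(G)=o(n^{\beta})$ (via Theorem A of \cite{Nik08}: many $\beta$-cliques would force a $K_\beta(t)$, hence an induced $K_\beta(s)$ and an induced copy of $F$), then applies the Removal Lemma to delete at most $\varepsilon n^2$ edges and obtain a $K_\beta$-free subgraph $G_0$, applies (\ref{in2}) to $G_0$ with the correct parameter $\beta-1$, and transfers the bound back to $G$ via Proposition \ref{pro10} --- which, despite the "$\alpha\le1$" typo in its statement, is the edge-perturbation estimate for $\alpha>1$ and is essential here, not dispensable as you suggest. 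Some such removal-plus-perturbation step is unavoidable: an edge-count bound alone, fed through (\ref{in1}) with the wrong clique parameter, cannot reach the constant $1-\frac{1}{\beta(\mathcal{F})-1}$ when $\underline{\omega}(\mathcal{F})>\beta(\mathcal{F})$.
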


\begin{proof}
First note the inequality
\[
\lambda^{\left(  \alpha\right)  }\left(  G\right)  \geq2e\left(  G\right)
/n^{2/\alpha},
\]
which follows by taking $\left(  x_{1},\ldots,x_{n}\right)  =\left(
n^{-1/\alpha},\ldots,n^{-1/\alpha}\right)  $ in (\ref{maxlam}). So we see
that
\[
\lambda^{\left(  \alpha\right)  }\left(  \mathcal{P}\right)  \geq\pi\left(
\mathcal{P}\right)  ,
\]
and this, together with Theorem \ref{thpi} gives $\lambda^{\left(
\alpha\right)  }\left(  \mathcal{P}\right)  =1$ if $\underline{\omega}\left(
\mathcal{F}\right)  =0$ and
\[
\lambda^{\left(  \alpha\right)  }\left(  \mathcal{P}\right)  \geq1-\frac
{1}{\beta\left(  \mathcal{F}\right)  -1}%
\]
otherwise. To finish the proof we shall prove that%
\[
\lambda^{\left(  \alpha\right)  }\left(  \mathcal{P}\right)  \leq1-\frac
{1}{\beta\left(  \mathcal{F}\right)  -1}%
\]

For the purposes of this proof, write $k_{r}\left(  G\right)  $ for the number
of $r$-cliques of $G$. Let $F\in\mathcal{F}$ be a complete $\beta$-partite
graph, which exists by the definition of $\beta\left(  \mathcal{F}\right)  ,$
and let $s$ be the maximum of the sizes of its vertex classes.

We recall the following particular version of the Removal Lemma, one of the
important consequences of the Szemer\'{e}di Regularity Lemma (\cite{Sze76}%
,\cite{Bol98}): \medskip

\textbf{Removal Lemma} \emph{Let }$r\geq2$\emph{ and }$\varepsilon>0.$\emph{
There exists }$\delta=\delta\left(  r,\varepsilon\right)  >0$\emph{ such that
if }$G$\emph{ is a graph of order }$n,$\emph{ with }$k_{r}\left(  G\right)
<\delta n^{r},$\emph{ then there is a graph }$G_{0}\subset G$\emph{ such that
}$e\left(  G_{0}\right)  \geq e\left(  G\right)  -\varepsilon n^{2}$\emph{ and
}$k_{r}\left(  G_{0}\right)  =0.\medskip$

In \cite{Nik08} we have proved the following theorem:\medskip

\textbf{Theorem A }\emph{For all }$r\geq2,$\emph{ and }$\varepsilon>0$\emph{
there exists }$\delta=\delta\left(  r,\varepsilon\right)  >0$\emph{ such that
if }$G$\emph{ a graph of order }$n$\emph{ with }$k_{r}\left(  G\right)
>\varepsilon n^{r},$\emph{ then }$G$\emph{ contains a }$K_{r}\left(  s\right)
$\emph{ with }$s=\left\lfloor \delta\log n\right\rfloor .\medskip$

Now let $\varepsilon>0,$ choose $\delta=\delta\left(  \beta,\varepsilon
\right)  $ as in the Removal Lemma, and set $t=r\left(  K_{r},K_{s}\right)  ,$
where $r\left(  K_{r},K_{s}\right)  $ is the Ramsey number of $K_{r}$ vs.
$K_{s}.$ If $G\in\mathcal{P}_{n},$ then $K_{\beta}\left(  t\right)  \nsubseteq
G$ as otherwise we see as in proof of Theorem \ref{thpi} that $G$ contains an
induced copy of $F.$ So by Theorem A, if $n$ is large enough, then $k_{\beta
}\left(  G\right)  \leq\delta n^{r}.$ Now by the Removal Lemma there is a
graph $G_{0}\subset G$ such that $e\left(  G_{0}\right)  \geq e\left(
G\right)  -\varepsilon n^{2}$ and $k_{\beta}\left(  G_{0}\right)  =0.$

By Propositions \ref{pro10} and \ref{maxlK}, for $n$ sufficiently large, we
see that
\[
\lambda^{\left(  \alpha\right)  }\left(  G\right)  \leq\lambda^{\left(
\alpha\right)  }\left(  G_{0}\right)  +\left(  2\varepsilon n\right)
^{2-2/\alpha}\leq\left(  1-\frac{1}{\beta-1}\right)  n^{2-2/\alpha}+\left(
2\varepsilon n\right)  ^{2-2/\alpha},
\]
and hence,
\[
\lambda^{\left(  \alpha\right)  }\left(  \mathcal{P},n\right)  n^{2/\alpha
-2}\leq1-\frac{1}{\beta-1}+\left(  2\varepsilon\right)  ^{2-2/\alpha}%
\]
Since $\varepsilon$ can be made arbitrarily small, we see that
\[
\lambda^{\left(  \alpha\right)  }\left(  \mathcal{P}\right)  \leq1-\frac
{1}{\beta-1},
\]
completing the proof of Theorem \ref{thlam}.
\end{proof}

\bigskip

To complete the picture, we need \ to determine the dependence of
$\lambda^{\left(  1\right)  }\left(  \mathcal{P}\right)  $ on $\mathcal{P}$.
Using the the well-known idea of Motzkin and Straus, we come up with the
following theorem, whose proof we omit

\begin{theorem}
$\lambda^{\left(  1\right)  }\left(  \mathcal{P}\right)  $Let $\mathcal{P}$ be
an infinite hereditary property. Then $\lambda^{\left(  1\right)  }\left(
\mathcal{P}\right)  =1$ if $\mathcal{P}$ contains arbitrary large cliques, or
$\lambda^{\left(  1\right)  }\left(  \mathcal{P}\right)  =1-1/r,$ where $r$ is
the size of the largest clique in $\mathcal{P}$.
\end{theorem}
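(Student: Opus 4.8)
The plan is to mimic the proof of Theorem \ref{thlam}, replacing the Erd\H{o}s--Stone/Removal Lemma machinery with the direct combinatorial identity of Motzkin and Straus, which is exactly the tool available at $\alpha=1$. First I would dispose of the case in which $\mathcal{P}$ contains arbitrarily large cliques: then $K_n\in\mathcal{P}_n$ for every $n$, and evaluating $\lambda^{(1)}$ on the uniform vector $(1/n,\dots,1/n)$ gives $\lambda^{(1)}(K_n)\ge 2\binom{n}{2}/n^2 = 1-1/n$, while the Motzkin--Straus theorem gives $\lambda^{(1)}(G)\le 1-1/\omega(G)\le 1$ for every graph $G$ on $n$ vertices; combining these with the definition of the limit yields $\lambda^{(1)}(\mathcal{P})=1$.

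Next I would treat the case in which the clique number is bounded, say $r$ is the size of the largest clique occurring in any member of $\mathcal{P}$. Since $\mathcal{P}$ is hereditary and infinite, $r\ge 1$; if $r=1$ then $\mathcal{P}$ consists of edgeless graphs and $\lambda^{(1)}(\mathcal{P})=0=1-1/r$, so assume $r\ge 2$. For the upper bound, every $G\in\mathcal{P}_n$ has $\omega(G)\le r$, hence by Motzkin--Straus $\lambda^{(1)}(G)\le 1-1/r$; since $n^{2/\alpha-2}=n^0=1$ when $\alpha=1$, this gives $\lambda^{(1)}(\mathcal{P},n)\le 1-1/r$ for all $n$ and therefore $\lambda^{(1)}(\mathcal{P})\le 1-1/r$. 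For the matching lower bound I need a sequence of graphs in $\mathcal{P}_n$ with $\lambda^{(1)}$ approaching $1-1/r$: the natural candidate is $T_{r}(n)$, the balanced complete $r$-partite Tur\'an graph. One checks $T_r(n)\in\mathcal{P}_n$ because every induced subgraph of $T_r(n)$ is a complete $r'$-partite graph with $r'\le r$ and hence has clique number at most $r$, so it cannot be a forbidden graph without forcing a $K_{r+1}$ somewhere; more carefully, $K_{r}\in\mathcal{P}$ by maximality of $r$, so $T_r(n)$ cannot contain any member of $\mathcal{F}$ as an induced subgraph, as such a member would itself be $K_{r}$-free-or-not in a way that contradicts the choice of $r$ — this bookkeeping mirrors the argument in Theorem \ref{thpi}. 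Then putting the uniform weight on the balanced $r$-partition gives $\lambda^{(1)}(T_r(n))\ge 2e(T_r(n))/n^2 = 1-1/r-o(1)$ (indeed Motzkin--Straus shows equality), so $\lambda^{(1)}(\mathcal{P})\ge 1-1/r$.

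The only genuinely delicate point is the verification that $T_r(n)\in\mathcal{P}_n$, i.e. that $K_{r}$ actually occurs as an induced subgraph of some graph in $\mathcal{P}$ — this is where hereditariness and the definition of $r$ as a maximum (rather than a supremum) of clique sizes both get used; if the largest clique has exactly $r$ vertices in some $G\in\mathcal{P}$, then $K_{r}\in\mathcal{P}$ and hence $K_r\notin\mathcal{F}$, from which $\underline{\omega}(\mathcal{F})\ge r+1$, and then $T_r(n)$, all of whose induced subgraphs are complete multipartite with at most $r$ parts, contains no member of $\mathcal{F}$. Everything else is a one-line invocation of Motzkin--Straus together with the fact that the normalising exponent $n^{2/\alpha-2}$ degenerates to $1$ at $\alpha=1$, which is precisely why the Szemer\'edi-type arguments of Theorem \ref{thlam} are unnecessary here.
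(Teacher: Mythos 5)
The paper actually omits its own proof of this theorem, saying only that it follows from ``the well-known idea of Motzkin and Straus,'' so there is nothing to compare line by line; your upper bound (Motzkin--Straus gives $\lambda^{(1)}(G)=1-1/\omega(G)$, since the maximum over the $\ell^{1}$-sphere may be taken over nonnegative vectors) and your treatment of the unbounded-clique case are exactly that idea and are correct. The genuine gap is in the lower bound: the claim that $T_{r}(n)\in\mathcal{P}_{n}$ is false in general, and the justification you give for it conflates the two parameters $\underline{\omega}(\mathcal{F})$ and $\beta(\mathcal{F})$ that the paper is careful to keep separate. A forbidden graph need not contain a large clique. Concretely, let $\mathcal{P}$ be the family of all disjoint unions of complete graphs of order at most $r$; this is an infinite hereditary property, equal to $Her(\mathcal{F})$ for $\mathcal{F}=\{K_{1,2},\,K_{r+1}\}$ where $K_{1,2}$ is the path on three vertices. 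The largest clique in $\mathcal{P}$ is $K_{r}$, yet $T_{r}(n)$ for $n\geq 2r$ contains an induced $K_{1,2}$ and so lies outside $\mathcal{P}$. This example is instructive: here $\beta(\mathcal{F})=2$, so by Theorems \ref{thpi} and \ref{thlam} one has $\pi(\mathcal{P})=\lambda^{(\alpha)}(\mathcal{P})=0$ for all $\alpha>1$, while $\lambda^{(1)}(\mathcal{P})=1-1/r$. The whole point of the $\alpha=1$ case is that the Lagrangian sees only the largest clique and not the global edge density, so the Tur\'{a}n graph is the wrong witness; the answer is governed by clique sizes, not by $\beta(\mathcal{F})$.

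The witness you want, in the true spirit of Motzkin--Straus, is a graph of order $n$ in $\mathcal{P}$ that merely \emph{contains} a $K_{r}$, for instance $K_{r}$ together with $n-r$ isolated vertices: concentrating the weight uniformly on the clique already gives $\lambda^{(1)}\geq 1-1/r$, and no density consideration is needed. Even here a further argument is required, namely that $\mathcal{P}$ contains, for all large $n$, \emph{some} $n$-vertex graph with clique number $r$; this does not follow from hereditariness plus the mere presence of $K_{r}$ somewhere in $\mathcal{P}$ (one can build infinite hereditary properties whose members of large order all have strictly smaller clique number), so the quantity $r$ in the statement must be read as the largest clique order that persists in members of $\mathcal{P}$ of arbitrarily large order. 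This is precisely the delicate point you flagged, but the resolution runs through cliques plus isolated vertices, not through $T_{r}(n)$.
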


\bigskip

\section{Concluding remarks}

In a cycle of papers the author has shown that many classical exremal results
like the Erd\H{o}s-Stone-Bolloabs theorem \cite{BoEr73}, the Stability Theorem
of Erd\H{o}s \cite{Erd66,Erd68} and Simonovits \cite{Sim68}, and various
saturation problems can be strengthened by recasting them for the largest
eigenvalue instead of the number of edges; see \cite{Nik11} for overview and references.

The results in the present note and in \cite{NikB} show that some of these
results can be extended further for $\lambda^{\left(  \alpha\right)  }\left(
G\right)  $ and $\alpha\geq1.$ A natural challenge here is to reprove
systematically all of the above problems by substituting $\lambda^{\left(
\alpha\right)  }\left(  G\right)  $ for the number of edges.\bigskip

\textbf{Acknowledgement}\ Thanks are due to Bela Bollob\'{a}s for useful
discussions.\bigskip

\bigskip

\end{document}